\documentclass[12pt]{article}

\usepackage[utf8]{inputenc}
\usepackage[T1]{fontenc}
\usepackage{amsmath,amsthm,amssymb,amsfonts}
\usepackage{mathrsfs}
\usepackage[hidelinks]{hyperref}
\usepackage{geometry}
\usepackage{enumitem}
\numberwithin{equation}{section}

\newtheorem{theorem}{Theorem}[section]

\newtheorem{proposition}[theorem]{Proposition}
\newtheorem{corollary}[theorem]{Corollary}
\newtheorem{definition}[theorem]{Definition}
\newtheorem{remark}[theorem]{Remark}

\newcommand{\SHORTTITLE}[1]{}
\newcommand{\TITLE}[1]{\title{#1}}

\newcommand{\KEYWORDS}[1]{}
\newcommand{\AMSSUBJ}[1]{}
\newcommand{\SUBMITTED}[1]{}
\newcommand{\ACCEPTED}[1]{}
\newcommand{\VOLUME}[1]{}
\newcommand{\YEAR}[1]{}
\newcommand{\PAPERNUM}[1]{}
\newcommand{\DOI}[1]{}

\newcommand{\ABSTRACT}[1]{\begin{abstract}#1\end{abstract}}

\DeclareMathOperator*{\cadlag}{\mathit{D}([0, T], \mathbb{R}^d)}
\DeclareMathOperator*{\R}{\mathbb{R}}

\DeclareMathOperator*{\extension}{a\otimes_s b}
\DeclareMathOperator*{\new_derivative}{\mathit{D}^\gamma F}
\DeclareMathOperator*{\diff_in_dir}{\mathfrak{C}^1_\gamma([0,T],\mathbb{R}^d)}
\DeclareMathOperator*{\partialGamma}{\partial^\gamma F}
\DeclareMathOperator*{\newGradientGamma}{\widetilde{\nabla}^\gamma F}
\DeclareMathOperator*{\diffInDir}{\mathfrak{C}^1_\gamma([0,T],\mathbb{R}^d)}
\DeclareMathOperator*{\diffAllDir}{\mathfrak{C}^1([0,T],\mathbb{R}^d)}
\DeclareMathOperator*{\newGradient}{\widetilde{\nabla} F}
\DeclareMathOperator*{\diffPartition}{\Delta x^n_i}
\begin{document}

\TITLE{Derivatives Along a Curve and the Functional Stochastic Calculus}
\author{Christian Houdr\'e\thanks{School of Mathematics, Georgia Institute of 
    Technology, Atlanta, GA 30332, USA ({\tt houdre@math.gatech.edu}).}
   \thanks{Research supported in part by grants \#524678 and MP-TSM-00002660 from the Simons Foundation.} 
\and
Jorge V\'{i}quez\thanks{This work was conducted while the author was a graduate student at the School of Mathematics, Georgia Institute of Technology, Atlanta, GA, 30332, USA. Additional funding was provided by the University of Costa Rica. ({\tt javiquez42@gmail.com})} }

\date{\today}

\maketitle

\ABSTRACT{Motivated by extending the functional stochastic calculus, 
to important functionals to which it does not apply, a notion of functional derivative along a curve is introduced.  
This new setting is developed by incorporating path-dependent directional extensions. Our results then focus on a comprehensive exploration of these derivatives and the insights  they provide on the structure of functionals.}

\vspace{0.2 cm}
\noindent{\textbf{AMS 2020 subject classifications}: 60G51; 60H05; 60H30}

\vspace{0.1 cm}
\noindent{\textbf{Keywords and phrases}: It\^{o}'s Formula; Functional Stochastic Calculus; Coinvariant Derivative; Malliavin Derivative; Path-dependent Functionals; Feynman–Kac Formula.}

\section{Introduction}

Evaluating the differential effects of extending a path in specific directions has been explored in various deterministic settings, e.g., see \cite{Kim} which was further advanced in \cite{KimBook}. These early works defined the concept of \emph{a coinvariant derivative} as functionals needed to describe the differential effect of extending a continuous path using a Lipschitz function. Further works, such as the one presented in \cite{MANIA} developed the derivative with respect to path extensions introduced in \cite{Chitasvili} to obtain functional integral formulas. However, these approaches are highly dependent on the probability measure of the underlying process, while the results presented below are obtained in a pathwise context, in the spirit of \cite{CONT20101043, Cont13}. 
Notions of vertical and horizontal derivatives for functionals defined on the set of continuous functions are introduced in \cite{Dupire09} where to define the vertical derivative it is first necessary to extend the functionals to the space of c\`{a}dl\`{a}g functions and take the derivative with respect to shocks on the last observed value.  In the following, a notion of space derivative is introduced by computing the differential effect of extending the path along directions determined by a separate functional $\gamma$; recovering, in particular, the horizontal derivative when $\gamma = 0$. Some consequences of the use of these derivatives in the context of the functional It\^{o} calculus are then briefly explored.\\

This article is organized as follows.
Section~\ref{sec:prelim} recalls the necessary background
on the notions of derivative used throughout and establishes the notations
and function spaces needed to state the results precisely.
Some remarks explore further relationships between the $\gamma$-derivative
and previously defined notions, including the horizontal derivative, the
Malliavin derivative, derivatives along smooth paths and the coinvariant
derivative.
Section~\ref{sec:counterexample} then presents an example of a functional
whose $\gamma$-derivatives exist along an infinite-dimensional family of
directions, yet whose horizontal derivative fails to exist at certain paths.  This,
together with the notion of regular directions introduced there, serves
to motivate the developments of Section~\ref{main}.
There, hypotheses are sharpened, leading to the main
Theorem~\ref{existence of horizontal derivative} on the relationship
between the horizontal derivatives and the other directional derivatives.
Next, Theorem~\ref{Coinvariant Ito} studies structural properties of the
directional derivative that lead to an It\^{o} integral formula. The
section then concludes by stating a Fisk-Stratonovich version of the
functional integral formulas, finishing with an application to
path-dependent PDEs via the Feynman--Kac formula.\\

\section{Notations and Definitions}\label{sec:prelim}

\subsection{Definitions and Pathwise Extensions}

This paper borrows the notation of \cite{Dupire09}, \cite{Cont13} and \cite{FirstPaper} to which we also refer for more background material. We work on $C([0,T],\mathbb{R}^d)$, the space of $\mathbb{R}^d$-valued continuous functions defined on $[0,T]$, equipped with the uniform topology. We also work on the Skorokhod space $D([0,T],\mathbb{R}^d)$ equipped 
with any metric $d_D$ weaker than (or the same as) the one induced by the uniform norm, including the metrics associated with each of the Skorokhod topologies.

Throughout, the functionals $F:[0,T]\times D([0,T], \mathbb{R}^d) \to \mathbb{R}$ are assumed to be measurable with respect to $\mathcal{B}([0,T])\otimes \mathcal{F}$, where $\mathcal{B}([0,T])$ is the Borel $\sigma$-field of $[0,T]$. In order to describe the regularity of these functionals, for $w \in D([0, T], \mathbb{R}^d)$, let first $w_{\wedge t}$ denote a path that has been stopped at $t \in [0,T]$, and horizontally extended, i.e., 
\begin{align}
    w_{\wedge t}(s) &:= 
    \begin{cases} 
    w(s), & \text{if } s < t, \\
    w(t), & \text{if } s \geq t.    
    \end{cases}\nonumber
\end{align}
Then $[0,T]\times D([0,T], \mathbb{R}^d)$ is equipped with a pseudo-metric $d_*$:
\begin{align}
    d_*((t,w), (s,v)) := |t-s|+d_D(w_{\wedge t},v_{\wedge s}).\nonumber
\end{align}
Next, a functional $F$ is \emph{non-anticipative} if for all $(t,x) \in [0,T]\times D([0,T],\mathbb{R}^d)$,
    \begin{align}
        F(t,x) &= F(t,x_{\wedge t}).\nonumber
    \end{align}
If $F$ is non-anticipative and $x,y \in D([0,T],\mathbb{R}^d)$ are such that $x_{\wedge t} = y_{\wedge t}$, then $F(t,x) = F(t,y)$. 
\emph{Throughout, all functionals are taken to be non-anticipative.}

\begin{definition}
A functional $F$ is \emph{fixed-time continuous} at $(t,x)$ if for every $\epsilon>0$ there exists $\delta>0$ such that $d_D(x_{\wedge t},y_{\wedge t})<\delta$ implies $|F(t,x_{\wedge t})-F(t,y_{\wedge t})|<\epsilon$, for all $y\in D([0,T],\mathbb{R}^d)$. If this holds for all $(t,x)\in[0,T]\times D([0,T],\mathbb{R}^d)$, $F$ is called fixed-time continuous.
\end{definition}

\begin{definition}
A functional $F$ is \emph{left-continuous} (in time) at $(t,x) \in [0,T]\times D([0,T],\mathbb{R}^d)$ if for all $\epsilon > 0$, there exists $\delta > 0$ such that for all $y \in \cadlag$,
    \begin{align}
        d_*((t,x_{\wedge t}), (s, y_{\wedge s})) <\delta,\, s\leq t \implies |F(t,x_{\wedge t})-F(s,y_{\wedge s})|< \epsilon.\nonumber
    \end{align}
A functional that is left-continuous for all $(t,x)\in [0, T]\times \cadlag$ is said to be left-continuous in time. Right-continuity in time is defined analogously, and a functional is continuous in time if it is both left and right continuous in time.   
\end{definition}

The set of left-continuous functionals is denoted $\mathscr{C}_l([0,T],\mathbb{R}^d)$, while the set of continuous ones 
is denoted $\mathscr{C}([0,T],\mathbb{R}^d)$.

\begin{definition}
A functional $F:[0,T]\times D([0,T],\mathbb{R}^d)\to \mathbb{R}$ is \emph{boundedness-preserving} if for every $t \in [0,T)$ and compact set $K \subset \mathbb{R}^d$, there exists a constant $C_{t,K}> 0$ such that:
        \begin{align}
            x(s) \in K,\,\, \forall s \in [0,t] \implies |F(s,x_{\wedge s})| \leq C_{t,K},\,\,\forall s \in [0,t].\nonumber
        \end{align}
\end{definition}

The set of boundedness-preserving functionals is denoted by $\mathscr{C}_b([0,T],\mathbb{R}^d)$, while $\mathscr{C}^{0,0}_l:=\mathscr{C}_b\cap\mathscr{C}_l([0,T],\mathbb{R}^d)$ and $\mathscr{C}^{0,0}:=\mathscr{C}_b\cap\mathscr{C}([0,T],\mathbb{R}^d)$. 

We first aim to define derivatives along curves.  To do so, we take extensions along an arbitrary direction instead of extending the path by keeping it constant as done with the horizontal derivative. Moreover, to allow dependencies on the process' past behavior, we describe it using a non-anticipative functional $\gamma \in \mathscr{C}^{0,0}([0,T],\mathbb{R}^d)$ which is Lipschitz with respect to its second variable, i.e., requiring that there exists $K > 0$ such that for all $t \in [0,T]$ and $x,y \in \cadlag$,
    \begin{align}\label{path_lipchitz}
        \| \gamma(t,x_{\wedge t})-\gamma(t,y_{\wedge t})\|_2 \leq K\| x_{\wedge t}-y_{\wedge t}\|_\infty,
    \end{align}
    \noindent where $\|\cdot\|_2$ and $\|\cdot\|_\infty$ are, respectively, the Euclidean and the supremum norm.

    \begin{remark}
    When the probability space is augmented to include randomness outside the underlying process, and if the space $(\Omega,\mathcal{F},(\mathcal{F}_t)_{t \in [0, T]},\mathbb{P})$ is such that $\cadlag \subset \Omega$, and $\sigma\{X(s): s\leq t\}\subseteq \mathcal{F}_t$, then one can extend \eqref{path_lipchitz} and take a non-anticipative $\gamma:\Omega \times \cadlag\to \mathbb{R}$ and a random variable $K:\Omega\times [0,T]\to \mathbb{R}$. Moreover, if $T = \infty$, $K$ can also be taken to be an increasing process (see \cite[Section V]{Protter}).
    \end{remark}

The next definition provides notations indicating extensions along a known path after a fixed time $s \in [0,T]$. This will be used to solve path-dependent differential equations by continuously updating the initial condition of the equation. Afterward, it expresses extensions of an underlying process belonging to $\cadlag$ along a specific direction $\gamma$.

    \begin{definition}
        Let $s \in [0,T]$, let $a \in D([0,s],\mathbb{R}^d)$, and let $b \in D([0,T-s],\mathbb{R}^d)$. The function $\extension \in \cadlag$ is defined via
        \begin{align*}
        (a\otimes_s b)(t) := \begin{cases}
        a(t), \text{ $t < s$,}\\
        b(t-s), \text{ $t \geq s$.}
        \end{cases}
        \end{align*}
        If $a, b \in \cadlag$, $a \otimes_s b := a_{[0,s]}\otimes_s b_{[0,T-s]}$, where $a_{[0,s]}$ and $b_{[0,T-s]}$ denote the respective restriction of $a$ and of $b$ to $[0,s]$ and $[0,T-s]$.
    \end{definition}
    
Next, for any $w \in \cadlag$ and $\gamma \in \mathscr{C}^{0,0}([0,T],\mathbb{R}^d)$ satisfying \eqref{path_lipchitz}, the existence and uniqueness of the right-continuous solution to the differential equation
    \begin{align}\label{eq:diffEq}
    \begin{cases}
    dy(t) = \gamma(t,y_{\wedge t})dt,\,\,t > s,\\
    y(t) = w(t), \text{ $t \in [0,s]$},
    \end{cases}
    \end{align}
\noindent is established, following classical ideas such as those presented in \cite[Chapter V, Theorem 7]{Protter}. As defined, $y$ is an absolutely continuous extension of $w$ at time $s$, with a possibly path dependent Radon-Nikod\'{y}m derivative.\\
First assume $s = 0$ and define the operator $I$ acting on $D([0,T],\mathbb{R}^d)$ by
\begin{align*}
    I(x)(t) &:= w(0) + \int_0^{t\wedge \frac{1}{2K}} \gamma(s,x_{\wedge s})\,\mathrm{d}s\\
    &= w(0)+\int_0^{t\wedge \frac{1}{2K}} \gamma(s,0_{\wedge s})\,\mathrm{d}s + \int_0^{t\wedge 1/2K} \left(\gamma(s,x_{\wedge s})-\gamma(s,0_{\wedge s})\right)\,\mathrm{d}s.
\end{align*}
\noindent By definition, $I$ is a contraction with respect to the infinity-norm, and therefore, by the Banach fixed point theorem, there exists a unique continuous solution $x^0$ such that for $t \leq 1/2K$, $x^0(t) = w(0)+\int_0^t \gamma(s,x^0_{\wedge s})\,\mathrm{d}s$. Then, if $s > 0$, a new functional $\gamma_s$ with domain $[s,T]\times D([0,T-s],\mathbb{R}^d)$ can be defined via $\gamma_s(t,x_{\wedge t}) := \gamma(t,(w\otimes_s x)_{\wedge t})$ and the previous observations still allow to define solutions up to, and including, $s+1/2K$. Therefore, using increments of size $1/2K$, a unique continuous solution $x^s$ to
\begin{align}\label{small_step_sol}
\begin{cases}
    x^s(t) = w(s)+\int_s^{t\wedge (s+1/2K)}\gamma_s(l,x^s_{\wedge l})\,\mathrm{d}l, \,\,\text{$t > s$},\\
    x^s(t) = w(t), \,\,\text{$t \leq s$},
\end{cases}
\end{align}
\noindent is obtained. Since this solution is unique, equation \eqref{small_step_sol} allows the recovery of a unique continuous function $x$ satisfying \eqref{eq:diffEq}; this solution to \eqref{eq:diffEq} is denoted by $Y^{s,w,\gamma}$.

The above path extensions $Y^{s,w,\gamma}$, driven by a functional $\gamma$, lead to the definition of a derivative that describes the regularity of a functional along arbitrary directions. For example, this construction allows us to study functionals for which it is only a priori known that they are differentiable in the direction of the process' mean (i.e., $\gamma(t,x_{\wedge t}) = \frac{1}{t}\int_0^t x(s)\mathrm{d}s$), or the process' maximum (i.e., $\gamma(t,x_{\wedge t}) = \sup_{s \in [0,t]} x(s)$).

\subsection{Functional Derivative Along a Curve}

We start this section recalling that the spatial perturbation of $w$ at $t\in [0,T]$ in the direction of $h \in \mathbb{R}^d$, is denoted via:
\begin{align}
    w_{\wedge t}^h(s) &:= 
    \begin{cases} 
    w(s), & \text{if } s < t, \\
    w(t)+h, & \text{if } s \geq t. 
    \end{cases}\nonumber
\end{align}

 We continue by introducing the notion of derivative along a curve.  

\begin{definition}\label{Functional_deriv}
Let $F:[0,T]\times D([0,T],\mathbb{R}^d) \to \mathbb{R}$, and let $\gamma:[0,T]\times D([0,T],\mathbb{R}^d) \to \mathbb{R}^d$. $F$ is said to be differentiable in the direction of $\gamma$ at $(t,x) \in [0,T) \times D([0,T),\mathbb{R}^d)$ if the following limit exists:
\begin{align}\label{eq:Functional_deriv}
\new_derivative(t,x_{\wedge t}) := \lim_{\eta \to 0^+} \frac{F(t+\eta, Y^{t,x,\gamma}_{\wedge (t+\eta)})-F(t,x_{\wedge t})}{\eta},
\end{align}
\noindent where $Y^{t,x,\gamma}$ is the solution to \eqref{eq:diffEq}. Moreover, $F$ is said to be differentiable in the direction $\gamma$ if \eqref{eq:Functional_deriv} exists for every $(t,x)$ in $[0,T) \times D([0,T),\mathbb{R}^d)$.
\end{definition}

When $\gamma = 0$, \eqref{eq:Functional_deriv} reduces to the definition of the 
horizontal (or time) derivative, that is, 
\begin{align}
    DF(t,w) &= \lim_{h\to 0^+} \frac{F(t+h,w_{\wedge t})-F(t,w_{\wedge t})}{h}.
\end{align}

\medskip

The functional It\^{o} calculus, as developed in \cite{Dupire09}, defined its derivatives for functionals evaluated on continuous paths, by first extending them to the space of c\`{a}dl\`{a}g functions. However, as discussed in \cite{ContChiu}, the natural metrics for this space tend to be restrictive, as the uniform metric leads to a non-separable topology, while the $J_1$ topology generated by the Skorokhod metric is such that basic functionals (such as the evaluation functional $(t,x) \to x(t)$) are not continuous. Thus, in order to avoid these issues, and to stay on a natural space for the functionals of continuous paths, the derivative along a curve in Definition~\ref{Functional_deriv} provides a suitable alternative to the time-derivative of \cite{Dupire09}.

\medskip 

\begin{remark}
In Definition~\ref{Functional_deriv}, the Lipschitz property of $\gamma$ guarantees   
the existence and uniqueness of $Y^{s,x,\gamma}$. More generally, if the
Lipschitz condition is relaxed but a process $Y^{s,x,\gamma}$ solving
\eqref{eq:diffEq} still exists for every
$(s,x) \in [0,T) \times D([0,T],\mathbb{R}^d)$, then the notion of derivative
along $\gamma$ can be defined in a similar way.
\end{remark}

\medskip

\begin{remark}
Definition~\ref{Functional_deriv} applies, in particular, when $\gamma$ does not depend on the past $x_{\wedge t}$, allowing for extensions along arbitrary absolutely continuous functions with non path-dependent Radon-Nikod\'{y}m derivative $\gamma$. Moreover, it is also possible to define the limit \eqref{eq:Functional_deriv} by concatenating the original path $x$ with an arbitrary extension $y \in D([0, T-t],\mathbb{R}^d)$. However, being differentiable along all such extensions is too strong of a requirement (take, for example, smooth functionals, with non-vanishing space derivatives, evaluated at paths with non-zero quadratic covariation). Therefore, we deal with derivatives obtained along solutions
to \eqref{eq:Functional_deriv}, as they allow us to represent cases where the functional is differentiable in a specific direction that may depend on the past of the process being evaluated.
\end{remark}

\medskip 

\begin{remark}
The derivative along any fixed smooth path is also obtained from Definition~\ref{Functional_deriv} by taking $\gamma$ to be equal to the derivative of the path. More precisely, if at any $t \in [0,T]$ a derivative is defined using extensions along a fixed, smooth path $y:[0,T] \to \mathbb{R}^d$ with slope $y^\prime(t)$, then one can take $\gamma(t, X_{\wedge t}) = y'(t)$. Moreover, as will be shown in Proposition~\ref{relation_deriv_prop} below, the derivative along $\gamma$ is only influenced by the slope of the extension at any pair $(t,X_{\wedge t})$, one could define the derivative using a functional that extends the path on a fixed slope, regardless of the pair $(t,X_{\wedge t})$. Again, this is covered by Definition \ref{Functional_deriv} by selecting $\gamma$ to be constant. Finally, the ability to extend functions in path-dependent directions will be of interest in the sequel.
\end{remark}

\medskip 

We next recall the notion of space-derivative.

\begin{definition}{\label{Space_Deriv}}
A functional $F$ is said to be space-differentiable at $(t,x) \in [0,T]\times D([0, T],\mathbb{R}^d)$, in the direction of the canonical basis vector $e_i$, $i \in \{1,...,d\}$, if the following limit exists:
\begin{align}
    \partial_i F(t,x_{\wedge t}) &:= \lim_{h \to 0}\frac{F(t,x_{\wedge t}^{he_i})-F(t,x_{\wedge t})}{h}.
\end{align}

If $F$ is space-differentiable at $(t,x)$ in the direction of $e_i$, for every $(t,x) \in [0,T] \times D([0,T],\mathbb{R}^d)$ and every $i \in \{1,\ldots,d\}$, we say that $F$ is \emph{space-differentiable} and denote the gradient of space derivatives by $\nabla F := (\partial_1 F,\ldots,\partial_d F)$.
\end{definition}

\medskip

With the help of these definitions, we finally introduce various function spaces used throughout.  For $i,j$ not both zero, $\mathscr{C}^{i,j}_l([0,T],\mathbb{R}^d)$ is the set of $F:[0,T]\times D([0,T],\mathbb{R}^d)\to\R$ that are $i$-times time-differentiable and $j$-times space-differentiable, with $F$ and all space derivatives in $\mathscr{C}^{0,0}_l$ and time derivatives in $\mathscr{C}_b$ that are fixed-time continuous. Analogously, $\mathscr{C}^{0,0}:=\mathscr{C}_b\cap\mathscr{C}([0,T],\mathbb{R}^d)$, and $\mathscr{C}^{i,j}([0,T],\mathbb{R}^d)$ is defined in a similar way with $\mathscr{C}^{0,0}$ in place of $\mathscr{C}^{0,0}_l$.

\medskip 

The result below, more precisely \eqref{eq:relation_deriv}, indicates that given the existence of the gradient of space derivatives $\nabla F$, the existence of the horizontal derivative $DF$ is equivalent to the existence of $D^\gamma F$, the derivative along the direction $\gamma$.  Its proof follows from the classical functional It\^{o} formula applied to $F$ along the flow $Y^{t,x,\gamma}$, together with a direct application of the dominated convergence theorem.  As this result will be strengthened in Theorem~\ref{existence of horizontal derivative}, its proof is omitted. More generally, if the right-continuity assumption on $DF$, $\nabla F$, and $D^{\gamma}F$ is dropped, Theorem~\ref{existence of horizontal derivative} shows that \eqref{eq:relation_deriv} continues to hold as Radon-Nikod\'{y}m derivatives.

\begin{proposition}\label{relation_deriv_prop}
Let $F \in \mathscr{C}^{1,2}_l([0,T],\mathbb{R}^d)$, and let $\gamma \in \mathscr{C}^{0,0}([0,T],\mathbb{R}^d)$ satisfy \eqref{path_lipchitz}. Suppose that $D^{\gamma}F$ exists and that $DF$, $\nabla F$, and $D^{\gamma}F$ are all right-continuous in time. Then
\begin{align}\label{eq:relation_deriv}
D^\gamma F(t, x_{\wedge t}) = D F(t,x_{\wedge t})+\langle \nabla F(t,x_{\wedge t^-}), \gamma(t,x_{\wedge t})\rangle,
\end{align}
where $\nabla F = (\partial_1 F,\ldots, \partial_d F)$ and $\langle \cdot, \cdot \rangle$ is the Euclidean inner product. 
\end{proposition}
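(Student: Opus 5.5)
The plan is to apply the classical functional It\^{o} formula of \cite{Dupire09, CONT20101043, Cont13} to $F$ along the absolutely continuous extension $Y := Y^{t,x,\gamma}$ on $[t,t+\eta]$. On $(t,t+\eta]$ the path $Y$ is continuous, of finite variation, and has zero quadratic variation, with $dY(s)=\gamma(s,Y_{\wedge s})\,ds$. Since $F\in\mathscr{C}^{1,2}_l$, the pathwise functional change of variables formula therefore reduces to its first-order version, with no second-order term:
\begin{align*}
F(t+\eta, Y_{\wedge(t+\eta)}) - F(t, Y_{\wedge t}) = \int_t^{t+\eta} DF(s,Y_{\wedge s})\,ds + \int_t^{t+\eta}\langle \nabla F(s,Y_{\wedge s}),\gamma(s,Y_{\wedge s})\rangle\,ds .
\end{align*}
The integrands here are well defined and locally bounded. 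Indeed, $Y$ takes values in a compact set on $[0,t+\eta]$ because it is c\`adl\`ag and continuous after $t$, so the boundedness-preserving assumptions on $DF$, $\nabla F$ and $\gamma$ give uniform bounds. This bound is what later justifies dominated convergence.

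One subtlety is the jump of $x$ at time $t$. The extension $Y$ starts at $x(t)$, so $Y_{\wedge t}=x_{\wedge t}$ and $F(t,Y_{\wedge t})=F(t,x_{\wedge t})$; no jump occurs inside $(t,t+\eta]$. The formula is nevertheless usually written with $\nabla F(s,Y_{\wedge s^-})$ and left limits. To match the statement, $\nabla F(t,x_{\wedge t^-})$ has to be read as the space derivative evaluated at the path stopped at $t$ and continued to the right, which agrees with $x_{\wedge t}$ once no further jump occurs. I would make this identification explicit before passing to the limit.

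Dividing by $\eta$, it remains to show that
\begin{align*}
\frac1\eta\int_t^{t+\eta}\Big(DF(s,Y_{\wedge s})+\langle\nabla F(s,Y_{\wedge s}),\gamma(s,Y_{\wedge s})\rangle\Big)ds \to DF(t,x_{\wedge t})+\langle\nabla F(t,x_{\wedge t}),\gamma(t,x_{\wedge t})\rangle .
\end{align*}
The key observation is that $d_*((s,Y_{\wedge s}),(t,x_{\wedge t}))\le |s-t| + \sup_{u\in[t,s]}|Y(u)-x(t)| \le |s-t|(1+C)$, where $C$ bounds $\gamma$ near $(t,x)$. Right-continuity in time of $DF$, $\nabla F$ and $\gamma$ at $(t,x)$ then gives pointwise convergence of the integrand as $s\downarrow t$. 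Combined with the uniform bound above, dominated convergence (or simply continuity of the integrand at the left endpoint) yields the limit. Hence the limit defining $D^\gamma F(t,x_{\wedge t})$ exists and equals the right-hand side of \eqref{eq:relation_deriv}. The assumed existence and right-continuity of $D^\gamma F$ are not needed for the identity itself; they only ensure consistency of the statement.

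The main obstacle is justifying the functional It\^{o} formula along $Y$ under only $\mathscr{C}^{1,2}_l$ regularity. The standard proof discretizes $[t,t+\eta]$, writes each increment as a horizontal step plus a vertical step, and needs left-continuity of $F$ and $\nabla F$ together with fixed-time continuity of $DF$ for the Riemann sums to converge. I would repeat that argument on $[t,t+\eta]$ with $Y$ continuous there. Because $Y$ has finite variation, the vertical increments $F(s,Y^{h}_{\wedge s})-F(s,Y_{\wedge s})$ can be expanded to first order with a mean value theorem in $h$. This requires only $\nabla F$, and the second-order remainder vanishes since $\sum|\Delta Y|^2\le C\eta\max|\Delta s|\to0$.
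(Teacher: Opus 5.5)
Your proposal is correct and follows the route the paper indicates for this omitted proof. You apply the classical functional It\^o formula to $F$ along the absolutely continuous flow $Y^{t,x,\gamma}$, where the second-order term vanishes, then divide by $\eta$ and pass to the limit using right-continuity and dominated convergence; you are also right that the existence and right-continuity of $D^\gamma F$ follow from the argument rather than being needed as hypotheses. Your remark on $x_{\wedge t^-}$ is more than cosmetic: the limit you obtain is $\nabla F(t,x_{\wedge t})$, which agrees with the stated $\nabla F(t,x_{\wedge t^-})$ only when $x$ is continuous at $t$, and at a jump time the literal formula fails (for $d=1$, $F(t,x_{\wedge t})=\tfrac12 x(t)^2$ and $\gamma\equiv 1$, one gets $D^\gamma F(t,x_{\wedge t})=x(t)$, while $DF(t,x_{\wedge t})+\nabla F(t,x_{\wedge t^-})\gamma = x(t-)$), so you should state the identity with $x_{\wedge t}$ or restrict it to continuity points of $x$.
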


\medskip 

In a converse way, the following corollary shows how the vertical derivatives can be recovered from the path derivatives in this context. It also offers an alternative way of showing that the definition of these derivatives is independent of how a functional is extended to operate on c\`{a}dl\`{a}g functions.  This result follows by applying Proposition~\ref{relation_deriv_prop} to each $\gamma_i$ and inverting the resulting linear system. In particular, the spatial gradient $\nabla F$ can be recovered from the directional derivatives without extending $F$ to operate on c\`{a}dl\`{a}g functions.

\medskip 

\begin{corollary}\label{gradient_recovery}
Let $F \in \mathscr{C}^{1,2}_l([0,T],\mathbb{R}^d)$, and let $\gamma_1,\ldots,\gamma_d \in \mathscr{C}^{0,0}([0,T],\mathbb{R}^d)$ satisfy \eqref{path_lipchitz}, with linearly independent images for all $(t,x) \in [0,T] \times D([0,T],\mathbb{R}^d)$. Assume that $DF, D^{\gamma_1}F,$
$\ldots, D^{\gamma_d}F$ exist and are right-continuous. Define $\Gamma:[0,T]\times D([0,T],\mathbb{R}^d) \to \mathbb{R}^{d\times d}$ with $i$-th row given by $\gamma_i$, and let $D^\gamma F := (D^{\gamma_1}F,\ldots,D^{\gamma_d}F)$. Then
\begin{align*}
\nabla F(t,x_{\wedge t}) = \Gamma^{-1}(t,x_{\wedge t})\bigl(D^\gamma F(t,x_{\wedge t}) - DF(t,x_{\wedge t})\mathbf{1}_d\bigr),
\end{align*}
where $\mathbf{1}_d \in \mathbb{R}^d$ is the vector with all entries equal to $1$. 
\end{corollary}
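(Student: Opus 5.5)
The plan is to apply Proposition~\ref{relation_deriv_prop} once for each direction $\gamma_i$, $i=1,\ldots,d$, and then solve the resulting linear system. For each $i$, the hypotheses of Proposition~\ref{relation_deriv_prop} hold: $F\in\mathscr{C}^{1,2}_l$, $\gamma_i\in\mathscr{C}^{0,0}$ satisfies \eqref{path_lipchitz}, and $D^{\gamma_i}F$ and $DF$ exist and are right-continuous. The one hypothesis not stated explicitly in the corollary is the right-continuity of $\nabla F$. I would either read it as implicit (it is needed to invoke the proposition) or add it as an assumption, noting that Theorem~\ref{existence of horizontal derivative} lets it be removed with equalities holding in the Radon--Nikod\'ym sense. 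The proposition then gives, for every $(t,x)$ and each $i$,
\begin{align*}
D^{\gamma_i}F(t,x_{\wedge t}) - DF(t,x_{\wedge t}) = \langle \nabla F(t,x_{\wedge t^-}), \gamma_i(t,x_{\wedge t})\rangle .
\end{align*}

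Next I stack these $d$ scalar identities. Since the $i$-th row of $\Gamma$ is $\gamma_i$, the right-hand sides form the vector $\Gamma(t,x_{\wedge t})\nabla F(t,x_{\wedge t^-})$. The left-hand sides form $D^\gamma F(t,x_{\wedge t}) - DF(t,x_{\wedge t})\mathbf{1}_d$. By assumption the vectors $\gamma_1(t,x),\ldots,\gamma_d(t,x)$ are linearly independent, so $\Gamma(t,x_{\wedge t})$ is invertible at every point. Multiplying by $\Gamma^{-1}$ then yields the formula, but with $\nabla F$ evaluated at $x_{\wedge t^-}$.

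The only delicate step is reconciling $\nabla F(t,x_{\wedge t^-})$ from \eqref{eq:relation_deriv} with $\nabla F(t,x_{\wedge t})$ in the statement.
\begin{itemize}
\item For continuous $x$ the two coincide, since $x_{\wedge t^-}=x_{\wedge t}$ and $F$ is non-anticipative.
\item For general c\`adl\`ag $x$, I would argue directly. The flow $Y^{t,x,\gamma_i}$ starts from $x(t)$ and extends continuously, so the It\^o/Taylor expansion along it, which underlies the proposition, involves the space derivative at the current endpoint $x(t)$. The proposition's proof thus gives the identity with $\nabla F(t,x_{\wedge t})$; alternatively, I would apply the proposition to the path $x_{\wedge t}$ itself.
\end{itemize}
Once this identification is made, the linear-algebra step is immediate, and it completes the proof.
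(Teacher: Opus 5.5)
Your argument is the paper's own: apply Proposition~\ref{relation_deriv_prop} to each $\gamma_i$, stack the $d$ identities into $\Gamma\,\nabla F = D^\gamma F - DF\,\mathbf{1}_d$, and invert $\Gamma$ using linear independence. The two caveats you raise are real and are not addressed in the paper's one-line proof: right-continuity of $\nabla F$ is needed but not listed among the corollary's hypotheses, and the proposition has $x_{\wedge t^-}$ where the corollary has $x_{\wedge t}$. Your primary fix for the mismatch, re-running the expansion along the continuous flow started at $x(t)$ so that right-continuity yields $\nabla F(t,x_{\wedge t})$, is sound; your fallback of applying the proposition to $x_{\wedge t}$ gains nothing, since $(x_{\wedge t})_{\wedge t^-}=x_{\wedge t^-}$.
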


\begin{remark}\label{Malliavin_remark}
The Malliavin derivative can be seen as the directional derivative along paths in the Cameron-Martin space \cite{Nualart}. In comparison, the derivative along $\gamma$ is defined in a pathwise manner; making no assumption on $\gamma$ being square-integrable since it does not rely on measure invariance along translations by this type of function. Thus, as with the space and time derivatives, the derivative along $\gamma$ can be defined independently of the underlying probability measure.
\end{remark}

\begin{remark}\label{First_Remark}
The multivariate equivalent of the function space used in \cite{Dupire09} can be defined as $\Lambda := \bigcup_{s \in [0,T]}\Lambda_s$, where $\Lambda_s = D([0,s],\mathbb{R}^d)$, while our framework deals with functions defined in $[0,T]\times D([0,T],\mathbb{R}^d)$. Functions from one space to the other can be shown to be equivalent under the identification $u: \Lambda \to [0,T]\times D([0,T],\mathbb{R}^d)$ such that 
\begin{align}\label{exp_notation}
    u(w_t) &= (t,w_{\wedge t}),
\end{align}
i.e., the function $w_t$ with domain $[0,t]$ is mapped to the pair on the right-hand side of \eqref{exp_notation}, which has a path component given by $w_{\wedge t}$. The reverse identification is as follows: for $v:[0,T]\times \cadlag\to \Lambda$, let
\begin{align*}
    v(t,w_{\wedge t}) &:= w_{|[0,t]},
\end{align*}
\noindent i.e. $v$ is the function obtained by restricting the domain of $w$ to the interval $[0,t]$.

Given any $w_t \in \Lambda_t$, note that the differential equation,
\begin{align*}
\begin{cases}
\!\!\quad\frac{dy}{ds}\!\!\!&\!\!=\gamma(s,y_{\wedge s}),\,\text{ $s \in (t,T]$},\\
\quad \!\! y(s) &= w(s),\,\text{ $s \in [0,t]$},
\end{cases}
\end{align*}
\noindent has a unique solution when the function $\gamma$ is Lipschitz. Thus, $y_s := y_{|[0,s]} \in \Lambda_s$ can be defined for all $s \in (t,T]$, and the derivative in the direction of $\gamma$ is given by,
\begin{align*}
    D^\gamma F_t (y_t) = \lim_{h\to 0^+}\frac{F_{t+h}(y_{t+h})-F_t(y_t)}{h}.
\end{align*}
\end{remark}

\bigskip
\bigskip

\section{A Counterexample}\label{sec:counterexample}

The hypothesis $F \in \mathscr{C}^{0,1}_l$ in Theorem \ref{existence of horizontal derivative} is necessary. To see this, define the set of regular directions of $F \in \mathscr{C}^{0,0}_l([0,T],\mathbb{R})$ by
\[
  R(F) := \big\{\gamma \in \mathscr{C}^{0,0}_l([0,T],\mathbb{R}) : D^\gamma F(t,x_{\wedge t}) \text{ exists for all } (t,x) \in [0,T) \times D([0,T],\mathbb{R})\big\}.
\]
If $F \in \mathscr{C}^{0,1}_l$, then Theorem \ref{existence of horizontal derivative} provides the integral representation \eqref{exist_horiz}, from which the horizontal derivative can be recovered. The following shows that this fails without the $\mathscr{C}^{0,1}_l$-hypothesis: for the functional presented below, $R(F)$ is an infinite-dimensional family of directions that does not contain $\gamma = 0$.
 
Let $d=1$, $T=1$, and define the running average $\hat{x}(t) := \frac{1}{t}\int_0^t x(s)\,ds$ for $t > 0$, with $\hat{x}(0) := x(0)$. Set
\begin{equation}\label{eq:counterexample}
  F(t, x_{\wedge t}) := f\!\big(x(t) - 2\hat{x}(t)\big),
\end{equation}
where $f(y) = y\sin(\log|y|)$ for $y \neq 0$ and $f(0) = 0$. Write $\Phi(t,x_{\wedge t}) := x(t) - 2\hat{x}(t)$. Note that $F$ vanishes along every linear path $x(s) = as$, since $\hat{x}(t) = at/2$ and thus $\Phi(t,x_{\wedge t}) = 0$ for all $t > 0$. We show that:
\begin{enumerate}[label=(\roman*)]
  \item\label{item:reg} $F \in \mathscr{C}^{0,0}_l([0,T],\mathbb{R})$,
  \item\label{item:space} the space derivative $\partial_i F$ does not exist at any $(t_0, x)$ with $\Phi(t_0,x_{\wedge t_0}) = 0$ and $t_0 > 0$,
  \item\label{item:horiz} the horizontal derivative $DF$ does not exist at any such $(t_0, x)$ with $\hat{x}(t_0) \neq 0$,
  \item\label{item:R} $R(F) = \big\{\gamma \in \mathscr{C}^{0,0}_l([0,T],\mathbb{R}) : \gamma(t,x_{\wedge t}) = \tfrac{2\hat{x}(t)}{t} \;\text{whenever}\; \Phi(t,x_{\wedge t}) = 0 \;\text{and}\; t > 0 \big\}$.
\end{enumerate}
In particular, $\gamma = 0 \notin R(F)$, since $\gamma = 0$ violates the constraint at any $(t,x)$ with $\Phi(t,x_{\wedge t}) = 0$ and $\hat{x}(t) \neq 0$. Yet $R(F)$ is infinite-dimensional: the constraint only binds on the surface $\{\Phi = 0\}$, leaving $\gamma$ free elsewhere.
 
\medskip
 
\noindent\textit{Proof of\/ \textup{(i)}.}
Since $|f(y)| \leq |y|$ for all $y$ and $|\Phi(t,x_{\wedge t})| \leq |x(t)| + 2|\hat{x}(t)| \leq 3\sup_{s \in [0,t]}|x(s)|$, boundedness-preserving holds: for any compact $K \subset \mathbb{R}$ and $x$ with $x(s) \in K$ for $s \in [0,t]$, one has $|F(s,x_{\wedge s})| \leq 3\sup\{|k| : k \in K\}$ for all $s \in [0,t]$. For fixed-time continuity, both $x \mapsto x(t)$ and $x \mapsto \hat{x}(t) = \frac{1}{t}\int_0^t x(s)\,ds$ are continuous in the uniform topology on $D([0,T],\mathbb{R})$, hence so is $\Phi(t,\cdot)$, and since $f$ is continuous, $F(t,\cdot)$ is continuous. For the joint left-continuity, let $(s_n, y_n) \to (t, x)$ with $s_n \leq t$ in the $d^*$ pseudo-metric. Then $y_n(s_n) \to x(t)$ and
\[
  \hat{x}_{y_n}(s_n) = \frac{1}{s_n}\int_0^{s_n} y_n(u)\,du \;\longrightarrow\; \frac{1}{t}\int_0^t x(u)\,du = \hat{x}(t),
\]
since $\|y_{n\,\wedge s_n} - x_{\wedge t}\|_\infty \to 0$ and $s_n \to t > 0$. Thus $\Phi(s_n, y_{n\,\wedge s_n}) \to \Phi(t, x_{\wedge t})$, and the continuity of $f$ gives $F(s_n, y_{n\,\wedge s_n}) \to F(t, x_{\wedge t})$.
 
\medskip
 
\noindent\textit{Proof of\/ \textup{(ii)}.}
Let $t_0 > 0$ and $\Phi(t_0, x_{\wedge t_0}) = 0$. The spatial perturbation $x^h_{\wedge t_0}$ adds $h$ to $x(t_0)$ while leaving $\hat{x}(t_0)$ unchanged, so $\Phi(t_0, x^h_{\wedge t_0}) = h$. Therefore,
\[
  \frac{F(t_0, x^h_{\wedge t_0}) - F(t_0, x_{\wedge t_0})}{h} = \frac{f(h)}{h} = \sin(\log|h|),
\]
which oscillates as $h \to 0$.
 
\medskip
 
\noindent\textit{Proof of\/ \textup{(iii)} and \textup{(iv)}.}
Fix $t_0 > 0$ and a continuous path $x$. Extend $x$ at time $t_0$ along an admissible direction $\gamma$. Write $\gamma_0 := \gamma(t_0, x_{\wedge t_0})$. By \eqref{eq:diffEq}, the flow satisfies
\[
  Y(t_0 + \eta) = x(t_0) + \int_{t_0}^{t_0+\eta} \gamma(s, Y_{\wedge s})\,ds = x(t_0) + \gamma_0\,\eta + \int_{t_0}^{t_0+\eta}\!\big[\gamma(s,Y_{\wedge s}) - \gamma_0\big]\,ds.
\]
Since $\gamma \in \mathscr{C}^{0,0}_l$, the integrand converges to $0$ as $\eta \to 0^+$, so the last integral is $o(\eta)$. The running average of $Y$ satisfies
\begin{align*}
  (t_0+\eta)\,\hat{x}_Y(t_0+\eta)
  &= \int_0^{t_0} x(s)\,ds + \int_{t_0}^{t_0+\eta} Y(s)\,ds \\
  &= t_0\,\hat{x}(t_0) + x(t_0)\,\eta + \int_{t_0}^{t_0+\eta}\!\int_{t_0}^{s} \gamma(u,Y_{\wedge u})\,du\,ds.
\end{align*}
Since $\gamma$ is boundedness-preserving, there exists $C > 0$ such that $|\gamma(u, Y_{\wedge u})| \leq C$ for $u$ near $t_0$, giving
\[
  \bigg|\int_{t_0}^{t_0+\eta}\!\int_{t_0}^{s}\gamma(u,Y_{\wedge u})\,du\,ds\bigg| \leq \frac{C\eta^2}{2}.
\]
Dividing by $t_0 + \eta$ and rearranging,
\[
  \hat{x}_Y(t_0+\eta) = \hat{x}(t_0) + \frac{x(t_0) - \hat{x}(t_0)}{t_0}\,\eta + o(\eta).
\]
Combining, the auxiliary functional $\Phi$ along the flow evaluates to
\begin{equation}\label{eq:Phi_expansion}
  \Phi(t_0+\eta, Y_{\wedge(t_0+\eta)}) = \Phi(t_0, x_{\wedge t_0}) + \bigg[\gamma_0 - \frac{2\big(x(t_0)-\hat{x}(t_0)\big)}{t_0}\bigg]\,\eta + o(\eta).
\end{equation}
We now consider two cases.
 
\smallskip
 
\noindent\textbf{Case $\Phi(t_0, x_{\wedge t_0}) \neq 0$.} Then $f$ is smooth in a neighbourhood of $\Phi(t_0, x_{\wedge t_0})$. Since~\eqref{eq:Phi_expansion} shows that $\eta \mapsto \Phi(t_0+\eta, Y_{\wedge(t_0+\eta)})$ is differentiable at $\eta = 0$ with derivative $\gamma_0 - 2(x(t_0)-\hat{x}(t_0))/t_0$, the chain rule applied to $f \circ \Phi$ along the flow gives
\[
  D^\gamma F(t_0, x_{\wedge t_0}) = f'\!\big(\Phi(t_0,x_{\wedge t_0})\big)\cdot\bigg[\gamma_0 - \frac{2\big(x(t_0)-\hat{x}(t_0)\big)}{t_0}\bigg],
\]
which exists for every $\gamma$. No constraint on $\gamma$ is imposed.
 
\smallskip
 
\noindent\textbf{Case $\Phi(t_0, x_{\wedge t_0}) = 0$.} Since $x(t_0) = 2\hat{x}(t_0)$, we have $x(t_0) - \hat{x}(t_0) = \hat{x}(t_0)$, and~\eqref{eq:Phi_expansion} reduces to
\[
  \Phi(t_0+\eta, Y_{\wedge(t_0+\eta)}) = \big(\alpha + \varepsilon(\eta)\big)\,\eta,
\]
where $\alpha := \gamma_0 - 2\hat{x}(t_0)/t_0$ and $\varepsilon(\eta) \to 0$ as $\eta \to 0^+$. The difference quotient becomes
\[
  \frac{F(t_0+\eta,Y_{\wedge(t_0+\eta)}) - F(t_0, x_{\wedge t_0})}{\eta} = \frac{f\!\big((\alpha + \varepsilon(\eta))\,\eta\big)}{\eta}.
\]
 
If $\alpha \neq 0$, then for $\eta$ sufficiently small $\alpha + \varepsilon(\eta) \neq 0$, and
\[
  \frac{f\!\big((\alpha+\varepsilon(\eta))\,\eta\big)}{\eta} = \big(\alpha + \varepsilon(\eta)\big)\sin\!\Big(\log\big|\alpha + \varepsilon(\eta)\big| + \log\eta\Big),
\]
whose prefactor converges to $\alpha \neq 0$ while the sine oscillates unboundedly often as $\eta \to 0^+$. The limit does not exist.
 
If $\alpha = 0$, then since $|f(y)| \leq |y|$ for all $y$,
\[
  \bigg|\frac{f\!\big(\varepsilon(\eta)\,\eta\big)}{\eta}\bigg| \leq |\varepsilon(\eta)| \longrightarrow 0,
\]
so $D^\gamma F(t_0, x_{\wedge t_0}) = 0$.
 
\smallskip
 
Therefore, $\gamma \in R(F)$ if and only if the $\alpha = 0$ condition holds at every $(t_0,x)$ with $\Phi(t_0, x_{\wedge t_0}) = 0$ and $t_0 > 0$, establishing~(iv). The horizontal direction $\gamma = 0$ gives $\alpha = -2\hat{x}(t_0)/t_0$, which is nonzero whenever $\hat{x}(t_0) \neq 0$, establishing~(iii).
 
\begin{remark}
  The regular direction $\gamma(t,x_{\wedge t}) = 2\hat{x}(t)/t = \frac{2}{t^2}\int_0^t x(s)\,ds$ is a non-anticipative functional of $x_{\wedge t}$. As is immediate from~\eqref{eq:Phi_expansion} with $\alpha = 0$, this is precisely the direction that preserves the constraint $\Phi = 0$ to first order. Along a linear path $x(s) = as$, it reduces to $\gamma = a$, the slope that continues the path linearly. This confirms that the hypothesis $F \in \mathscr{C}^{0,1}_l$ in Theorem \ref{existence of horizontal derivative} is necessary: without spatial differentiability, $R(F)$ need not contain the horizontal direction, and the structure of the regular directions is governed by a path-dependent constraint on~$\gamma$.
\end{remark}
 
\begin{remark}
  The $\gamma$-derivative can moreover be made right-continuous without restoring the horizontal derivative. Define
  \[
    \gamma^*(t, x_{\wedge t}) := \frac{2\big(x(t) - \hat{x}(t)\big)}{t}, \quad t > 0.
  \]
  For $t > 0$ and continuous paths, $\gamma^*$ is continuous in time (hence right-continuous) and non-anticipative. At any $(t_0, x)$ with $\Phi(t_0, x_{\wedge t_0}) \neq 0$, the function $f$ is smooth near $\Phi(t_0, x_{\wedge t_0})$, and the chain rule gives
  \[
    D^{\gamma^*}\! F(t_0, x_{\wedge t_0}) = f'\!\big(\Phi(t_0, x_{\wedge t_0})\big) \cdot \bigg[\gamma^*_0 - \frac{2\big(x(t_0) - \hat{x}(t_0)\big)}{t_0}\bigg] = 0,
  \]
  since $\gamma^*$ is defined to make the bracketed term vanish identically. At points where $\Phi(t_0, x_{\wedge t_0}) = 0$, the case $\alpha = 0$ in the proof above gives $D^{\gamma^*}\! F(t_0, x_{\wedge t_0}) = 0$ as well. Thus $D^{\gamma^*}\! F \equiv 0$ for all $t > 0$, which is trivially right-continuous. Yet, as shown in~\textup{(iii)}, the horizontal derivative $DF$ still does not exist at points where $\Phi = 0$ and $\hat{x}(t) \neq 0$.
\end{remark}

\section{Integral Formulas and Applications}\label{main}

The forthcoming result presents a relationship between the horizontal and $\gamma$-derivative. The proof strategy emulates the one used in \cite{Dupire09} with the modification that the piecewise constant functions used to approximate the process are replaced in each of the partition's subintervals by the solution to a path-dependent ordinary differential equation.

\begin{theorem}
\label{existence of horizontal derivative}
Let $F \in \mathscr{C}^{0,1}_l([0,T],\mathbb{R}^d)$, be differentiable in the direction of a Lipschitz and boundedness-preserving $\gamma$. Then $F \in \mathscr{C}^{1,1}_l([0,T],\mathbb{R}^d)$, and
\begin{align}\label{horizontal_functional_rel}
DF(t,x_{\wedge t}) &= D^\gamma F(t,x_{\wedge t})-\langle \nabla F(t,x_{\wedge t^-}), \gamma(t,x_{\wedge t})\rangle, \,\,\text{for a.e. $t \in [0,T]$}.
\end{align}
Thus, 
\begin{align}\label{exist_horiz}
    F(t+h,x_{\wedge t})-F(t,x_{\wedge t}) = \int_0^h DF(t+s, x_{\wedge t})\,\mathrm{d}s.
\end{align}
\noindent
\end{theorem}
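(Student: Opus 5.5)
Fix $(t,x)$ and $h>0$. The plan is to write the horizontal increment $F(t+h,x_{\wedge t})-F(t,x_{\wedge t})$ as a limit of telescoping sums along a partition, in the spirit of \cite{Dupire09}. On each subinterval the stopped path is replaced by a $\gamma$-extension, and the resulting terms are split into a ``$\gamma$-step'' and a ``vertical correction''. Let $\pi_n=\{t=s_0<s_1<\dots<s_n=t+h\}$ have mesh tending to $0$, and write $z:=x_{\wedge t}$. For each $i$, set $Y^i:=Y^{s_i,z,\gamma}$, the $\gamma$-flow started from the constant path $z$ at time $s_i$. Then decompose
\begin{align*}
F(s_{i+1},z)-F(s_i,z) = \bigl[F(s_{i+1},Y^i_{\wedge s_{i+1}})-F(s_i,z)\bigr] - \bigl[F(s_{i+1},Y^i_{\wedge s_{i+1}})-F(s_{i+1},z)\bigr].
\end{align*}
Up to a vertical jump at $s_{i+1}$, the paths $Y^i_{\wedge s_{i+1}}$ and $z_{\wedge s_{i+1}}$ differ only on $(s_i,s_{i+1})$. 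There $Y^i$ moves by $\int_{s_i}^{\cdot}\gamma$, which is $O(\Delta s_i)$ by boundedness-preservation, since $z$ takes values in a compact set.

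The first bracket is handled by the existence of $D^\gamma F$. The function $u\mapsto F(u,Y^{s_i,z,\gamma}_{\wedge u})$ has right derivative $D^\gamma F$ at $u=s_i$. To obtain an integral representation I would apply, along the flow, the fact that a continuous function with a finite right Dini derivative everywhere is the integral of that derivative, provided the derivative is bounded. This gives
\begin{align*}
F(s_{i+1},Y^i_{\wedge s_{i+1}})-F(s_i,z)\approx\int_{s_i}^{s_{i+1}}D^\gamma F(u,\cdot)\,du.
\end{align*}
Here left-continuity of $F$ and boundedness-preservation are what make the flow-evaluated function usable.

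For the second bracket, I would compare $Y^i_{\wedge s_{i+1}}$ with the vertically perturbed path $z^{\delta_i}_{\wedge s_{i+1}}$, where $\delta_i=Y^i(s_{i+1})-z(t)=\int_{s_i}^{s_{i+1}}\gamma\,du$. Their uniform distance is $O(\Delta s_i)$, but the goal is an $o(\Delta s_i)$ comparison of the $F$-values, which uniform closeness does not give by itself. The cleaner route is therefore to interpolate: by the mean value theorem in the vertical direction, using $F\in\mathscr{C}^{0,1}_l$,
\begin{align*}
F(s_{i+1},z^{\delta_i})-F(s_{i+1},z)=\langle\nabla F(s_{i+1},z^{\theta\delta_i}),\delta_i\rangle.
\end{align*}
The mismatch between $Y^i_{\wedge s_{i+1}}$ and $z^{\delta_i}$ is handled as in Dupire's proof. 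Refine the flow segment into piecewise-constant vertical steps, apply the vertical mean value theorem on each, and use left-continuity of $\nabla F$ to pass to the limit. Summing over $i$, the second brackets converge to $\int_t^{t+h}\langle\nabla F(u,z_{\wedge u^-}),\gamma(u,z_{\wedge u})\rangle\,du$, by left-continuity of $\nabla F$ and $\gamma$ together with dominated convergence, all integrands being bounded by boundedness-preservation.

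Combining the two limits gives
\begin{align*}
F(t+h,z)-F(t,z)=\int_0^h\bigl(D^\gamma F-\langle\nabla F,\gamma\rangle\bigr)(t+s,z)\,ds.
\end{align*}
Hence $s\mapsto F(t+s,z)$ is absolutely continuous. Lebesgue differentiation then yields \eqref{horizontal_functional_rel} for a.e.\ $s$, with the integrand serving as $DF$, and \eqref{exist_horiz} follows. Membership in $\mathscr{C}^{1,1}_l$ comes from boundedness of the integrand.

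The main obstacle is the first bracket. $D^\gamma F$ is only assumed to exist pointwise and not to be continuous, so passing from the pointwise right derivative at each $s_i$ to the integral of $D^\gamma F$ over $[s_i,s_{i+1}]$ is not automatic. The flows $Y^i$ also depend on $i$, and nothing so far controls the error uniformly in $i$. I would first attempt to bound $D^\gamma F$ locally, which forces the difference quotients to be uniformly bounded. I would then apply a Dini-derivative/Lebesgue argument to the single function $u\mapsto F(u,Y^{t,z,\gamma}_{\wedge u})$, and compare the flow restarted at each $s_i$ against it using the Lipschitz estimate \eqref{path_lipchitz} and Grönwall.
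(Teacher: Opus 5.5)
Your decomposition has a genuine gap. It sits in the second bracket, not the first. The mismatch $F(s_{i+1},Y^i_{\wedge s_{i+1}})-F(s_{i+1},z^{\delta_i}_{\wedge s_{i+1}})$ compares $F$ at the \emph{same} time on two paths that differ on the open interval $(s_i,s_{i+1})$, and you need it to be $o(\Delta s_i)$ uniformly in $i$. Neither tool you have gives this:
\begin{itemize}
\item Fixed-time continuity yields only $o(1)$ per term. Even a Lipschitz bound in the uniform norm would give only $O(\Delta s_i)$ per term, hence $O(h)$ in total.
\item $\nabla F(s_{i+1},\cdot)$ only sees perturbations of the value at the terminal time $s_{i+1}$.
\end{itemize}
Refining the flow segment into a staircase ``as in Dupire'' does not help. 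Passing from one stair to the next is a horizontal extension, and its $F$-increment is precisely the horizontal increment you are trying to compute. Once your first bracket and vertical term are identified, your identity reads
\[
F(t+h,z)-F(t,z)=\int_t^{t+h}\bigl(D^\gamma F-\langle\nabla F,\gamma\rangle\bigr)(u,z)\,du-\sum_i\bigl[F(s_{i+1},Y^i_{\wedge s_{i+1}})-F(s_{i+1},z^{\delta_i}_{\wedge s_{i+1}})\bigr]+o(1).
\]
So showing that the mismatches sum to $o(1)$ is equivalent to the integral identity you are after, and the decomposition is circular.

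The missing idea, which the paper uses, is never to compare with the target path at intermediate times. Build instead a single path $y^n$ made of $\gamma$-flow segments glued together by vertical jumps, converging uniformly to $z$:
\begin{itemize}
\item At each $s_k$, $y^n$ jumps from $x(t)$ to $x(t)-\int_{s_k}^{s_{k+1}}\gamma(u,y^n_{\wedge u})\,du$.
\item It then follows the $\gamma$-flow, which returns it to $x(t)$ at $s_{k+1}^-$; the paper imposes this as a terminal condition.
\end{itemize}
For this sawtooth path, each telescoping increment $F(s_{k+1},y^n_{\wedge s_{k+1}})-F(s_k,y^n_{\wedge s_k})$ splits \emph{exactly} into two pieces:
\begin{itemize}
\item A flow increment from $(s_k,y^n_{\wedge s_k})$ to $(s_{k+1},y^n_{\wedge s_{k+1}^-})$. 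This lies on a single $\gamma$-flow, so it equals $\int_{s_k}^{s_{k+1}}D^\gamma F(u,y^n_{\wedge u})\,du$.
\item A pure vertical jump at $s_{k+1}$. The fundamental theorem of calculus for $v\mapsto F\bigl(s_{k+1},(y^n_{\wedge s_{k+1}^-})^{v-x(t)}\bigr)$ turns it into $-\langle\nabla F,\gamma\rangle\,\Delta s_{k+1}$ to leading order. The tags are left endpoints, which matches the left-continuity of $\nabla F$.
\end{itemize}
The only comparisons with $z$ are at the two endpoints, e.g.\ $F(t+h,y^n_{\wedge(t+h)})\to F(t+h,z)$, where $o(1)$ suffices by fixed-time continuity. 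The paper runs this for an arbitrary continuous bounded-variation $y$, which produces an extra $\int\langle\nabla F,\mathrm{d}y\rangle$ term, and then takes $y$ constant.

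The issue you flagged as the main obstacle is real but secondary. Since $\|Y^i-z\|_\infty\le C|\pi_n|$, it is handled exactly as in the paper. The paper likewise relies on fixed-time continuity and a uniform bound for $D^\gamma F$ along the approximating paths; it invokes these in the proof without listing them in the statement.
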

 In other words, the derivative in the $\gamma$-direction enables the construction of a horizontal derivative in the Radon-Nikod\'{y}m sense. Moreover, if the integrand on the right-hand side of \eqref{exist_horiz} is right-continuous, its integral is differentiable, and this derivative coincides with the horizontal one.
\begin{proof}
First, take $y \in C([0,T],\mathbb{R}^d)$, $y$ of bounded variation with $y(0) = x(0)$. Next, define $y^{k,n}$, $n \geq 1$, $k \in \{1,...,n\}$ sequentially, via:
\begin{align}\label{gamma_extension}
\begin{cases}
    dy^{k,n}(t) = \gamma(t,y^{k,n}_{\wedge t})dt,\\
    y^{k,n}_{\wedge \frac{(k-1)T}{n}^-} = y^{k-1,n}_{\wedge \frac{(k-1)T}{n}},\\
    y^{k,n}(kT/n) = y(kT/n).
\end{cases}
\end{align}

To see that \eqref{gamma_extension} has a unique solution, after defining each $y^{k-1,n}$, one defines $\gamma^{k-1,n}: [0,T-\frac{(k-1)T}{n}]\times D([0,T-\frac{(k-1)T}{n}],\mathbb{R}^d)$ via $\gamma^{k-1,n}(t,w) := \gamma(t,y^{k-1,n}\otimes_{\frac{(k-1)T}{n}} w)$. Thus, \eqref{gamma_extension} turns into:
\begin{align*}
\begin{cases}
dz^{k,n} = \gamma^{(k-1),n}(t,z^{k,n}_{\wedge t})\mathrm{d}t,\\
z^{k,n}(T/n) = y(kT/n),
\end{cases}
\end{align*}
\noindent which has a solution 
using the same Lipschitz arguments as before. Then, let
\begin{align*}
    &y^{k,n} = y^{k-1,n}\otimes_{\frac{(k-1)T}{n}} z^{k,n},\\
    &y^n = \sum_{k = 1}^n y^{k,n}(t){\bf 1}_{[(k-1)T/n,kT/n)}(t).
\end{align*}
\noindent Fix $M > 0$, and let $C$ be such that $\| \gamma(t,z_{\wedge t})\|_2 < C$, for all $z$ satisfying $\| z-y\|_\infty < M$. Note that solutions to \eqref{gamma_extension} have the form $y^{k,n}(t) = y(kT/n)-\int_t^{kT/n}\gamma^{k-1,n}(s,y^{k,n}_{\wedge s})\,\mathrm{d}s = y(kT/n)-\int_t^{kT/n}\gamma(s,y_{\wedge s})\mathrm{d}s-\int_t^{kT/n}(\gamma^{k-1,n}(s,y^{k,n}_{\wedge s})-\gamma(s,y_{\wedge s}))\,\mathrm{d}s$. Moreover, since $y$ is uniformly continuous, there exists $N_1 \geq 1$ such that for $n \geq N_1$, if $|t-s| < 1/n$, then $\vert y(s)-y(t)\vert < \epsilon$. Furthermore, for any $\epsilon > 0$, there also exists $N_2 \geq 1$ such that $KT/N_2 < \epsilon$, where $K$ is from \eqref{path_lipchitz} and $CT/N_2 < M$. Take $n \geq N = N_1 \lor N_2$, and note that for $t <T/n$,
\begin{align*}
    \| y^{1,n}(t)-y(t)\|_2 \leq \epsilon+\frac{CT}{n}+\| y^{1,n}-y\|_\infty \epsilon,
\end{align*}
\noindent implies that,
\begin{align*}
    \| y^{1,n}-y\|_\infty(1-\epsilon) \leq \epsilon+\frac{CT}{n},
\end{align*}

\noindent and that for $\epsilon$ small enough this implies $\| y^{1,n}-y\|_\infty < M$. Assume, for the induction hypothesis, that this is true for $k =1,...,m$, then for $m + 1$, $K$ as in \eqref{path_lipchitz}, and $t \in (mT/n,(m+1)T/n]$,
\begin{align*}
   \| y^{m+1, n}(t)-y(t)\|_2 &\leq \epsilon+\frac{CT}{n}+\left(M \lor  \sup_{t \in (\frac{mT}{n},\frac{(m+1)T}{n}]}\left(\| y^{m+1, n}(t)-y(t)\|_2\right)\right)\epsilon.
\end{align*}
Regardless of which of the two values the maximum takes, by choosing $\epsilon$ small enough, we obtain $\| y^{m+1, n}-y\|_\infty < M$, which in turn implies $\|\gamma^{m,n}(t,y^n_{\wedge t})\|_2 < C$. This common bound can then be used to see that $\| y-y^n\|_\infty \to 0$, uniformly in $n$.

\noindent Finally, define $f^{k,n}:\mathbb{R}^d \to \mathbb{R}$, via $f^{k,n}(z) := F\left(kT/n,(y^n_{\wedge kT/n^-})^{z-y(kT/n)}\right)$, and note that
\begin{align*}
    \nabla f^{k,n}(z) = \nabla F\left(kT/n,(y^n_{\wedge kT/n^-})^{z-y(kT/n)}\right).
\end{align*}
Next, if $\tilde{z}^{k,n}:[Tk/n,T(k+1)/n] \to \mathbb{R}^d$, is defined via,
\begin{align*}
    \tilde{z}^{k,n} := y(t)-\int_{Tk/n}^t \gamma(s,y^n_{\wedge s})\mathrm{d}s.
\end{align*}
\noindent Then,
\begin{align*}
    F(kT/n&,y^n_{\wedge \frac{kT}{n}})-F((k-1)T/n,y^n_{\wedge (k-1)T/n})\\
    =  &F(kT/n,y^n_{\wedge \frac{kT}{n}^-})-F((k-1)T/n,y^n_{\wedge (k-1)T/n})\\
    &+f^{k,n}\left(y((k+1)T/n)-\int_{Tk/n}^{T(k+1)/n} \gamma(t,y^n_{\wedge t})\,\mathrm{d}t\right)-f^{k,n}(y(kT/n))\\
    = &\int_{(k-1)T/n}^{kT/n} D^\gamma F(t,y^n_{\wedge t})\,\mathrm{d}t
    +\int_{kT/n}^{(k+1)T/n} \langle\nabla F(kT/n,(y^n_{\wedge kT/n^-})^{\tilde{z}^{k,n}(t)-y(kT/n)}),\mathrm{d}y(t)\rangle\\
    &-\int_{kT/n}^{(k+1)T/n} \langle \nabla F(kT/n,(y^n_{\wedge kT/n^-})^{\tilde{z}^{k,n}(t)-y(kT/n)}),\gamma(t,y^n_{\wedge t})\rangle\,\mathrm{d}t.
\end{align*}

\noindent Since $F(T,y^n_{\wedge T}) \to F(T,y_{\wedge T})$, and
\begin{align*}
F(T,y^n_{\wedge T})-F(0,y^n_{\wedge 0}) = \sum_{k = 1}^n  \left(F(kT/n,y^n_{\wedge \frac{kT}{n}})-F((k-1)T/n,y^n_{\wedge (k-1)T/n})\right),
\end{align*}

\noindent it follows that:
\begin{align*}
    F(T&,y^n_{\wedge T})-F(T/n,y^n_{\wedge T/n}) \nonumber\\
    = &\int_{T/n}^T  D^\gamma F(t,y^n_{\wedge t})\,\mathrm{d}t\nonumber\\
    &+\int_{2T/n}^T \sum_{k = 2}^{n-1} \langle\nabla F(kT/n,y_{\wedge kT/n^-}^{\tilde{z}^{k,n}(t)-y(kT/n)}),\,\mathrm{d}y(t)\rangle{\bf 1}_{[kT/n,(k+1)T/n)}(t)\nonumber\\
    &-\int_{2T/n}^T \sum_{k = 2}^{n-1} \langle\nabla F(kT/n,y_{\wedge kT/n^-}^{\tilde{z}^{k,n}(t)-y(kT/n)}),\gamma(t,y^{k+1,n}_{\wedge t})\rangle{\bf 1}_{[kT/n,(k+1)T/n)}(t)\,\mathrm{d}t.
\end{align*}

Since $y^{k,n}(t) \to y(t)$ uniformly in $t$, since $\gamma$ and $D^\gamma F$ are fixed-time continuous, since the processes $(\gamma(t,y^n_{\wedge t}))_{ t\in [0,T]}$ share an upper bound uniform in $n\in\mathbb{N}$, and since the space derivatives are left-continuous, by taking the limit as $n \to \infty$, this last equality turns into:
\begin{align*}
 F(T,y_{\wedge T})\!-\!\!F(0,y_{\wedge 0}) = \int_{0}^T \!\!\!\!\!\! D^\gamma F(t,y_{\wedge t})\,\mathrm{d}t+\int_0^T\!\!\!\!\!\! \nabla F(t,y_{\wedge t})\,\mathrm{d}y(t)-\int_0^T\!\!\!\!\! \langle\nabla F(t,y_{\wedge t}),\gamma(t,y_{\wedge t})\rangle\,\mathrm{d}t&.
\end{align*}

If the time $s$ at which the function is to be extended is different from $t = 0$, it suffices to redefine $\bar{y} = x\otimes_s y$, and $F_s:[0,T-s]\times D([0,T-s],\mathbb{R}^d)$ such that $F_s(t,w) = F(t+s, x\otimes_s w)$ and the conclusion of the theorem continues to hold. Thus if $F \in \mathscr{C}^{0,1}_l([0,T],\mathbb{R}^d)$, and there exists a fixed-time continuous $\gamma$-derivative $D^\gamma F$, the following relationship is obtained:
\begin{align}
\label{eq:end_proof1}
F(t+h,x_{\wedge t})\!\!-\!\!F(t,x_{\wedge t})= \int_t^{t+h}\!\!\!\!\!\!D^\gamma F(s,x_{\wedge t})\mathrm{d}s-\int_t^{t+h}\!\!\!\!\!\!\langle \nabla F(s,x_{\wedge t}), \gamma(s,x_{\wedge t})\rangle\mathrm{d}s.
\end{align}
\end{proof}
The representation \eqref{eq:end_proof1} enables us to write extensions along fixed paths as absolutely continuous functions with respect to the Lebesgue measure, which is the property used in the proof of the functional It\^{o} formula. Moreover, if $D^\gamma F, \nabla F$, and $\gamma$ are all right-continuous, then the horizontal derivative exists and is given by \eqref{horizontal_functional_rel}.\\

In Theorem~\ref{existence of horizontal derivative}, there is no reason to prefer the horizontal direction over the others. However, another question that might arise is whether the existence of derivatives along \emph{any} direction allows for an integral formula, enabling to replace the space derivative by this directional derivative. In order to study this question, the following definition is presented.

\begin{definition}\label{notation_derivs}
Let $\gamma \in \mathscr{C}_l^{0,0}([0,T],\mathbb{R}^d)$, and let $\diff_in_dir$ denote the set of all\\
$F \in \mathscr{C}_l^{0,0}([0,T],\mathbb{R}^d)$ such that there exist functionals $\partial^\gamma F, \widetilde{\nabla}^\gamma F$ in $\mathscr{C}^{0,0}([0,T],\mathbb{R}^d)$, for which
\begin{align}\label{diff_direction}
    D^\gamma F(t,x_{\wedge t}) &= \partialGamma(t,x_{\wedge t})+\langle \newGradientGamma(t,x_{\wedge t}),\gamma(t,x_{\wedge t})\rangle,
\end{align}
\noindent for all $(t,x)\in [0,T]\times \cadlag$, with $\widetilde{\nabla}^\gamma F = (\tilde{\partial}^\gamma_1 F, \tilde{\partial}_2^\gamma F,...,\tilde{\partial}_d^\gamma F)$. 
\end{definition}

\medskip

Let now $\diffAllDir$ be the set of all $ F \in \mathscr{C}^{0,0}_l([0,T],\mathbb{R}^d)$ such that for all Lipschitz $\gamma \in \mathscr{C}_l^{0,0}([0,T],\mathbb{R}^d)$, $F \in \diffInDir$ and the functionals $\partial^\gamma F$ and $\tilde{\nabla}^\gamma F$ in \eqref{diff_direction} are the same for all $\gamma$.  (In this instance  they will be denoted respectively as $\partial F$ and $\newGradient$.)  Observe that if $F \in \mathscr{C}^{1,1}([0,T], \mathbb{R}^d)$, then \eqref{diff_direction} applies to all relevant $\gamma$, and thus $\mathscr{C}^{1,1}([0,T],\mathbb{R}^d) \subseteq \diffAllDir$.

\begin{remark}
    If $F \in \diffAllDir$, then $F \in \mathscr{C}^{1,0}([0,T],\mathbb{R}^d)$ and $\partial F = DF$. Moreover, if $F \in \mathscr{C}^{1,1}([0,T],\mathbb{R}^d)$, then as a consequence of Theorem \ref{existence of horizontal derivative}'s proof, $F \in \diffAllDir$, with $\partial F = DF$ and $\tilde{\nabla}F = \nabla F$.
\end{remark}

\medskip 

Definition~\ref{notation_derivs} matches the definition of the coinvariant derivative with respect to absolutely continuous extensions described in \cite{Kim} and recovers the first-order term in the derivatives used in  \cite{Zhang}. Moreover, for $F \in \mathfrak{C}^1([0,T],\mathbb{R}^d)$ and $i > 1$, we say that $F \in \mathfrak{C}^i([0,T],\mathbb{R}^d)$ if $F$ together with $\tilde {\partial}_tF,\tilde{\partial}_1F,...,\tilde{\partial}_dF$ belong to $\mathfrak{C}^{i-1}([0,T],\mathbb{R}^d)$. With this last piece of notation, and using the notion of quadratic covariation along a partition $\pi$, denoted by $[\, \cdot \,]_\pi$, and the integral with respect to a partition, see, e.g. \cite{CONT20101043}, the following theorem holds true:

\medskip 

\begin{theorem}
\label{Coinvariant Ito}
Let $F \in \mathfrak{C}^2([0, T],\mathbb{R}^d)$, let $x \in C([0,T],\mathbb{R}^d)$, let $\pi = \{\pi_n\}_{n \in \mathbb{N}}$ be a sequence of partitions such that $\lim_{n \to \infty} |\pi_n| = 0$, and let $x$ be of bounded quadratic covariation along the partition $\pi$. Then,
\begin{align}\label{Ito_along_a_direction}
    F(t,x_{\wedge t}) -F(0,x_{\wedge 0}) = \int_0^t \partial F(s,x_{\wedge s})\,\mathrm{d}s +\int_0^t \langle\newGradient(s,x_{\wedge s}), \mathrm{d}^\pi x(s)\rangle&\nonumber\\
    + \frac{1}{2}\int_0^t Tr((\tilde{\nabla
    })^2 F(s,x_{\wedge s})\mathrm{d}[x]_\pi(s)).&
\end{align}
\end{theorem}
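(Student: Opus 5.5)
The plan is to reduce Theorem~\ref{Coinvariant Ito} to the pathwise functional It\^{o} formula of \cite{CONT20101043, Cont13}. To do so I will show that every $F\in\mathfrak{C}^2([0,T],\mathbb{R}^d)$ has horizontal and vertical derivatives along the relevant paths, and that these coincide with $\partial F$ and $\widetilde{\nabla}F$. I will not try to repeat the Dupire approximation directly; I will identify the derivatives first and then invoke the known formula.

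\textbf{Step 1 (horizontal derivative).} Take $\gamma=0$, which is Lipschitz and lies in $\mathscr{C}^{0,0}_l$. The defining property \eqref{diff_direction} gives $D^0F=\partial F$, so $DF$ exists and equals $\partial F\in\mathscr{C}^{0,0}$. This is the content of the remark preceding the theorem.

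\textbf{Step 2 (vertical derivative).} Fix $(t,x)$ and $h\in\mathbb{R}^d$, and take the constant direction $\gamma\equiv h$. Then $Y^{t,x,h}(t+\eta)=x(t)+\eta h$, and \eqref{diff_direction} reads
\[
D^hF=\partial F+\langle\widetilde{\nabla}F,h\rangle .
\]
I would integrate this along the flow. Using continuity of $\partial F$ and $\widetilde{\nabla}F$ (so that the one-sided derivative of the real function $\eta\mapsto F(t+\eta,Y_{\wedge(t+\eta)})$ is continuous, hence it is $C^1$), I get
\[
F(t+\eta,Y_{\wedge(t+\eta)})-F(t+\eta,x_{\wedge t})=\int_0^\eta\langle\widetilde{\nabla}F(t+s,Y_{\wedge(t+s)}),h\rangle\,ds+o(\eta).
\]
The subtracted term is handled by Step 1. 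Next I replace the linear ramp of slope $h$ over $[t,t+\eta]$ by a jump of size $\eta h$. These paths are close in the $d_*$ sense with errors $O(\eta)$, so continuity of $F$ and of its derivatives should transfer the identity. The conclusion I aim for is $\nabla F=\widetilde{\nabla}F$ on continuous paths, and similarly $\nabla^2F=\widetilde{\nabla}^2F$ using $F\in\mathfrak{C}^2$ applied to $\widetilde{\nabla}F$.

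This identification is the main obstacle. A jump is not an absolutely continuous extension, so I must pass to the limit of steep ramps of slope $h/\eta$. The Lipschitz constant of $\gamma\equiv h/\eta$ is zero, so that part is harmless. What is needed is uniformity of \eqref{diff_direction} in $\gamma$, which is exactly the requirement in $\mathfrak{C}^1$ that $\partial F$ and $\widetilde{\nabla}F$ do not depend on $\gamma$. If this limiting argument is too delicate, the fallback is to avoid vertical derivatives entirely, as follows.

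\textbf{Step 3 (approximation).} The fallback mimics the proof of Theorem~\ref{existence of horizontal derivative}. On each interval $[t_k,t_{k+1}]$ of $\pi_n$, I replace the piecewise-constant approximant by the piecewise-linear interpolant of $x$, i.e.\ a constant direction $\gamma_k=(x(t_{k+1})-x(t_k))/(t_{k+1}-t_k)$. Then \eqref{diff_direction} gives the exact increment
\[
\int_{t_k}^{t_{k+1}}\bigl(\partial F+\langle\widetilde{\nabla}F,\gamma_k\rangle\bigr)\,ds .
\]
A second-order expansion, obtained by applying \eqref{diff_direction} once more to $\widetilde{\nabla}F$ (which is possible because $F\in\mathfrak{C}^2$), produces
\[
\langle\widetilde{\nabla}F(t_k),\Delta x_k\rangle+\tfrac12\,\mathrm{Tr}\bigl(\widetilde{\nabla}^2F(t_k)\,\Delta x_k\Delta x_k^{T}\bigr)+\text{remainder}.
\]
Summing over $k$ and using the definitions of the F\"ollmer integral and of $[x]_\pi$ from \cite{CONT20101043} yields \eqref{Ito_along_a_direction}. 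The remainder is controlled by uniform continuity of the second-order derivatives on the compact set of interpolants, together with boundedness-preservation and $\sup_k|\Delta x_k|\to0$.
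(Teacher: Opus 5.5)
Your main route (Steps 1--2, then the Cont--Fourni\'e formula) has a genuine gap. The problem is not that the limit is delicate: the identification you aim for is false for general $F\in\mathfrak{C}^2([0,T],\mathbb{R}^d)$.

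Membership in $\mathfrak{C}^2$ constrains $F$ only along the continuous extensions $Y^{t,x,\gamma}$. It says nothing about how $F$ reacts to a jump at the current time. A counterexample:

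\begin{itemize}
\item Let $J(t,x):=\sup_{0<s\le t}\|x(s)-x(s^-)\|_2$ be the largest jump up to time $t$. Since $\|\Delta x(s)-\Delta y(s)\|_2\le 2\|x-y\|_\infty$, $J$ is $2$-Lipschitz for $d_*$ and boundedness-preserving.
\item Every $Y^{t,x,\gamma}$ is continuous after $t$, so $J(t+\eta,Y_{\wedge(t+\eta)})=J(t,x_{\wedge t})$ and $D^\gamma J\equiv 0$ for every $\gamma$.
\item Hence $F:=J^2\in\mathfrak{C}^2$ with $\partial F=0$, $\widetilde{\nabla}F=0$ and $\widetilde{\nabla}^2F=0$.
\item For continuous $x$, however, $F(t,(x^h_{\wedge t})^k_{\wedge t})=\|h+k\|_2^2$. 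So $\nabla F(t,x^h_{\wedge t})=2h$ and $\nabla^2F(t,x_{\wedge t})=2I\neq\widetilde{\nabla}^2F$.
\item For $F=J$ itself the vertical derivative does not exist at all, since $J(t,x^h_{\wedge t})=\|h\|_2$.
\end{itemize}

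The Cont--Fourni\'e formula cannot apply to $F=J^2$ either. For continuous $x$ with $[x]_\pi\neq 0$ it would assert $0=F(t,x_{\wedge t})-F(0,x_{\wedge 0})=\mathrm{Tr}\,[x]_\pi(t)$. Consistently, $\nabla^2F=0$ at paths carrying a small past jump, which are arbitrarily close to $x$, so $\nabla^2F$ is not continuous.

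The step that breaks is the ramp-versus-jump comparison. A ramp of slope $h$ on $[t,t+\eta]$ and a jump of size $\eta h$ are $O(\eta)$ apart. Continuity of $F$ then gives only an $o(1)$ difference in values, whereas comparing difference quotients needs $o(\eta)$. For the steep ramp of slope $h/\eta$, the uniform distance to the jump path stays of order $\|h\|_2$, so no continuity hypothesis links the two. The fact that $\partial F$ and $\widetilde{\nabla}F$ do not depend on $\gamma$ does not help: a jump path is not a uniform limit of such extensions.

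Your Step 3 is the argument that works, and it is essentially the paper's proof. Present it as the proof rather than as a fallback. Its structure is:

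\begin{itemize}
\item On each $[\tau^n_i,\tau^n_{i+1})$ the polygonal interpolant is the $\gamma$-extension with constant slope $\Delta x^n_i/\Delta\tau^n_i$.
\item $F\in\mathfrak{C}^2$ makes $\eta\mapsto F(\tau^n_i+\eta,x^n_{\wedge(\tau^n_i+\eta)})$ twice continuously differentiable, which gives a second-order expansion on each interval.
\item The cross terms, of order $\Delta\tau^n_i\|\Delta x^n_i\|_2$ and $(\Delta\tau^n_i)^2$, vanish in the sum.
\item The quadratic term converges by the definition of $[x]_\pi$.
\item The first-order Riemann sums therefore converge, and their limit defines the pathwise integral.
\end{itemize}

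On one point your sketch is more careful than the paper. You control the remainder by uniform continuity of $\widetilde{\nabla}^2F$ on the compact set $\{(s,x^n_{\wedge s})\}\cup\{(s,x_{\wedge s})\}$, which is compact because $x^n\to x$ uniformly. The paper's modulus $r(\epsilon)$ is instead a supremum over all paths. Its vanishing would require global uniform continuity, which continuity alone does not provide.
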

\begin{proof}
To prove \eqref{Ito_along_a_direction}, an approach similar to the functional It\^{o} formula's  proof presented in \cite{Dupire09} is used, with the modification that a linear approximation is used instead the step-wise constant one. To that extent, take a sequence $\pi_n = \{\tau^n\}$ of partitions of $[0,T]$, such that $\lim_{n \to \infty} |\pi_n| = 0$. Once we obtain the sequence of partitions $\pi = \{\pi_n\}_{n \in \mathbb{N}}$, define $\Delta \tau^n_i = \tau^n_{i+1}-\tau^n_i$ and $\diffPartition = x(\tau^n_{i+1})-x(\tau^n_i)$. Using these terms, define the polygonal approximation $x^n$ of $x$ as
\begin{align*}
    x^n(t) &= \sum_{i = 0}^{k_n-1} \left(x(\tau^n_i)+(t-\tau^n_i)\frac{\Delta x^n_i}{\Delta \tau^n_i}\right){\bf 1}_{[\tau^n_i,\tau^n_{i+1})}(t)+x(T){\bf 1}_{\{T\}}(t).
\end{align*}
\noindent Since $F$ is continuous for the topology of uniform convergence, $F(T,x^n_{\wedge T}) \to F(T,x_{\wedge T})$ as $n \to \infty$. Moreover, for $t \in [\tau^n_i,\tau^n_{i+1})$, $x^n$ behaves as the extension of $x^n$ in the direction of $\Delta x^n_i/\Delta \tau^n_i$ at time $\tau_i^n$, thus the map $\eta \mapsto F(\tau^n_i+\eta, x^n_{\wedge(\tau^n_i+\eta)})$ is twice continuously differentiable by the definition of $\mathfrak{C}^2([0,T],\mathbb{R}^d)$, and a standard Taylor expansion gives, 
\begin{align}
\label{eqn:Ito Error}
F(\tau^n_{i+1},x^n_{\wedge \tau^n_{i+1}})-F(\tau^n_i,x^n_{\wedge \tau^n_i}) = \partial F(\tau^n_i,x^n_{\wedge\tau^n_i})\Delta\tau^n_i+\left\langle \tilde{\nabla} F(\tau^n_i,x^n_{\wedge \tau^n_i}),\Delta x^n_i\right\rangle\nonumber&\\
+\frac{(\Delta \tau^n_i)^2}{2}\partial\partial F(^*\tau_i^n,x^n_{\wedge*\tau^n_i})+\frac{\Delta \tau^n_i}{2}\langle\partial\tilde{\nabla}F(^*\tau_i^n,x^n_{\wedge*\tau^n_i}),\Delta x^n_i\rangle\nonumber &\\
+\frac{1}{2}\langle\Delta x^n_i,(\tilde{\nabla})^2F(\tau^n_i,x^n_{\wedge \tau^n_i}) \Delta x^n_i\rangle+R(\tau^n_i,\tau^n_{i+1},x^n_{\wedge \tau^n_{i+1}})&, 
\end{align}
\noindent where $\partial \tilde{\nabla} F = (\partial\tilde{\partial}_1F,...,\partial\tilde{\partial}_d F)$, and $^*\tau^n_i \in [\tau^n_i,\tau^n_{i+1})$ for all $i \in \{0,1,...,k_n-1\}$. Since $x$ is continuous, $\|x\|_\infty$ is bounded by a constant $M > 0$, and so the residual term $R$ can be bounded in the following manner:
\begin{align*}
    R(\tau^n_i,\tau^n_{i+1},x^n_{\wedge \tau^n_{i+1}}) &\leq \frac{1}{2}\langle\Delta x^n_i,((\tilde{\nabla})^2F(s^n_i,x^n_{\wedge s^n_i})-(\tilde{\nabla})^2 F(\tau^n_i,x^n_{\wedge \tau^n_i}))\Delta x^n_i\rangle,\nonumber\\
    &\leq r((\|x_{\wedge \tau^n_i}-x^n_{\wedge \tau^n_i}\|_\infty\lor\max_{i \in \{0,...,k_n-1\}} \|\Delta x^n_i\|_2)+|\pi_n|)\|\Delta x^n_i\|_2^2,
\end{align*}
\noindent with $s^n_i \in [\tau^n_i,\tau^n_{i+1})$, and
\begin{align*}
r(\epsilon) := d^2\max_{0\leq i,j\leq d}\sup\{|\tilde{\partial}_i \tilde{\partial}_jF(t,y)-\tilde{\partial}_i \tilde{\partial}_jF(s,z)|: \|y_{\wedge t} - z_{\wedge s}\|_\infty + |t-s| < \epsilon\}.
\end{align*}

Since the second-order derivatives of $F$ are continuous, $\lim_{\epsilon \to 0^+} r(\epsilon) = 0$. Now, since $F,\partial F,\tilde{\nabla} F$, and $\tilde{\nabla}^2 F$ are all boundedness preserving, the first two terms in \eqref{eqn:Ito Error} converge to their corresponding term on the right-hand side of \eqref{Ito_along_a_direction}, while the third and fourth terms converge to $0$ since $x^n$ converges to $x$ uniformly, and the fifth term converges to the last integral on the right-hand side of \eqref{Ito_along_a_direction}, since $x$ is of bounded quadratic covariation along the partition $\pi$. Meanwhile, for the error term, if $\epsilon_n = (\|x-x^n\|_\infty\lor\max_{i \in \{0,...,k_n-1\}} \|\Delta x^n_i\|_2)+|\pi_n|$, then
\begin{align*}
    \left\vert\sum_{i = 0}^{k_n-1} R(\tau^n_i,\tau^n_{i+1},x^n_{\wedge \tau^n_{i+1}})\right\vert &\leq r(\epsilon_n)\sum_{i = 0}^{k_n-1} (x(\tau^n_{i+1})-x(\tau^n_i))^2,
\end{align*}
\noindent where the last expression converges to $0$, since $x$ is of bounded quadratic covariation along $\pi$. Therefore, the second term on the right-hand side of \eqref{eqn:Ito Error} converges to an integral $\int_0^T \tilde{\nabla} F(t,x_{\wedge t})\cdot \mathrm{d}^\pi x(t)$ defined by
\begin{align*}
    &\int_0^T \tilde{\nabla} F(t,x_{\wedge t})\cdot \mathrm{d}^\pi x(t) = \lim_{n \to \infty} \sum_{\tau^n_i \in\pi_n} \langle\tilde{\nabla} F(\tau_i^n,x_{\tau^n_i}),\Delta x_i^n\rangle\\
    &= F(T,x_{\wedge T})-F(0,x_{\wedge 0})-\int_0^T\partial F(t,x_{\wedge t})\mathrm{d}t-\frac{1}{2}\int_0^T Tr(\tilde{\nabla}^2 F(t,x_{\wedge t})\mathrm{d}[x]_\pi(t)).
\end{align*}
\end{proof}
If $\mathbb{P}$ is a probability measure such that $X$ is a semimartingale, and $\pi = \{\pi_n\}_{n \in \mathbb{N}}$ is a sequence of partitions such that $|\pi_n| \to 0$ then, as in \cite{Cont13}, $x$ has bounded quadratic covariation along the sequence $\pi$ a.s. Therefore, Theorem~\ref{Coinvariant Ito} introduces the notion of coinvariant derivative (see \cite{Kim}) for functions with pathwise quadratic variation. Moreover, although a similar derivative has been proposed in \cite{Zhang} in the setting of path-dependent differential equations, Theorem~\ref{Coinvariant Ito} characterizes the derivative needed for these problems and shows that the second derivative can be taken to be the coinvariant derivative of the functional's first derivative. This derivative could also be studied as in \cite{Chitasvili, MANIA}; however, there, the proof of the functional It\^{o}'s formula relies heavily on the probability measure, while the above proof is pathwise. 

Since Theorem~\ref{Coinvariant Ito} recovers the functional It\^{o}'s formula in the context of derivatives along paths, classical results derived from this formula also follow. Below, as a sample, we outline the Fisk-Stratonovich formula for functionals of continuous paths, and present a brief application to path-dependent differential equations.\\

Let $X, Y: [0, T] \to \mathbb{R}$ be two continuous semimartingales. For $t \in [0, T]$, recall that the Fisk-Stratonovich integral $\int_0^t X(s) \circ\,\mathrm{d}Y(s)$ is given by,
\begin{align}
\int_0^t X(s)\circ\,\mathrm{d}Y(s) &= \int_0^t X(s)\,\mathrm{d}Y(s) + \frac{1}{2}[X, Y](t).
\end{align}

\noindent 
Then Theorem~\ref{Coinvariant Ito}, applied to the derivative $\tilde \partial_i F$ leads to,

\begin{align*}
\left[\tilde{\partial}_i F(\cdot, X_{\wedge \cdot}), X_i\right](t) &= \sum_{j = 1}^d \int_0^t\tilde{\partial}_i \tilde{\partial_j}F(s, X_{\wedge s})\mathrm{d}[X_i, X_j](s).
\end{align*}

Thus, summing over $i$ gives the following functional Fisk-Stratonovich formula:  Let $F \in \mathfrak{C}^{3}([0,T],\mathbb{R}^d)$, and let $(X(t))_{t \in [0,T]}$ be a.s.~a continuous semimartingale.  Then, $\mathbb{P}$-a.s.~, 
\begin{align*}
    F(t,X_{\wedge t}) - F(0,X_{\wedge 0}) = \int_0^t \partial F(s, X_{\wedge s})ds +  \sum_{i = 1}^d \int_0^t \tilde{\partial}_i F(s, X_{\wedge s}) \circ \mathrm{d}X_i(s).
\end{align*}

To finish, we illustrate another use of functional derivatives by obtaining, in our setting, a very classical result, namely, the Feynman-Kac formula. This version of the formula characterizes the differential behavior of a functional given by the conditional expectation with respect to a path's history, even when the functionals involved in its definition cannot be extended to include c\`{a}dl\`{a}g functions on their domain. Its proof follows the same outline as that presented through horizontal derivatives in \cite[Theorem 2]{Dupire09}.

\begin{proposition}
Let $X:[0, T] \to \mathbb{R}^d$ be a semimartingale satisfying
\begin{align*}
dX(s) = a(s, X_{\wedge s})\mathrm{d}s + \sigma(s, X_{\wedge s^-})\mathrm{d}B(s),
\end{align*} 
with $a = (a_1,...,a_d)$, and $\sigma = (\sigma_{i,j})_{1\leq i\leq d, 1\leq j\leq m}$ such that $a_i \in \mathscr{C}^{0,0}([0,T],\mathbb{R}^d)$, $\sigma_{i,j} \in \mathscr{C}^{0,0}([0,T],\mathbb{R}^d)$, and $B$ a standard $m$-dimensional Brownian motion. Furthermore, let $g \in \mathscr{C}^{0,0}([0,T],\mathbb{R}^d), r \in \mathscr{C}^{0,0}([0,T],\mathbb{R}^d)$ be such that $g$ is integrable and $r$ is positive. Define the non-anticipative functional $f \in \mathscr{C}^{0,0}([0,T],\mathbb{R}^d)$ by,
\begin{align*}
f(t, x) &= \mathbb{E}\left[e^{-\int_t^T r(s, X_{\wedge s})\,\mathrm{d}s} g(T,X_{\wedge T})|X_{\wedge t} = x_{\wedge t}\right].
\end{align*}
Then, if $f \in \mathfrak{C}^2([0, T], \mathbb{R}^d)$, its functional derivatives satisfy 
\begin{align}\label{F-Kac}
\partial f(t, X_{\wedge t}) + \langle a(t, X_{\wedge t}), \tilde{\nabla}f(t, X_{\wedge t^-})\rangle - r(t, X_{\wedge t})f(t,X_{\wedge t})\nonumber &\\
+ \frac{1}{2}Tr(\tilde{\nabla}^2 f(t, X_{\wedge t}) \sigma(t, X_{\wedge t})\sigma(t, X_{\wedge t})^T) = 0&\,\,\text{a.s.}
\end{align} 
\begin{proof}
Let $h \in \mathscr{C}^{0,0}([0,T],\mathbb{R}^d)$ satisfy
\begin{align*}
h(t, X_{\wedge t}) = e^{-\int_0^t r(s, X_{\wedge s})\,\mathrm{d}s}f(t,X_{\wedge t}).
\end{align*}
By the tower property of conditional expectation,
\begin{align*}
h(t, X_{\wedge t}) = \mathbb{E}\!\left[e^{-\int_0^T r(s, X_{\wedge s})\,\mathrm{d}s}\,
g(T,X_{\wedge T})\,\Big|\,\mathcal{F}_t\right].
\end{align*}
Since $r$ is positive and $g$ is integrable, the terminal variable
$e^{-\int_0^T r\,\mathrm{d}s}\,g(T,X_{\wedge T})$ is integrable, so $h$
is a true martingale. On the other hand, a direct application of
Theorem~\ref{Coinvariant Ito} gives
\begin{align}\label{h-dynamics}
dh(t, X_{\wedge t}) = e^{-\int_0^t r(s, X_{\wedge s})\,\mathrm{d}s}
\Big( -r(t, X_{\wedge t})f(t,X_{\wedge t})+\partial f(t, X_{\wedge t})
+ \langle a(t, X_{\wedge t}),\tilde{\nabla}f(t, X_{\wedge t^-})\rangle
\nonumber &\\
+ \tilde{\nabla}f(t, X_{\wedge t^-})^T\sigma(t, X_{\wedge t^-})dB(t)
+ \frac{1}{2}Tr(\tilde{\nabla}^2 f(t, X_{\wedge t})
\sigma(t, X_{\wedge t})\sigma(t, X_{\wedge t})^T\Big)
\,\,\text{a.s.}&
\end{align}
By the uniqueness of the semimartingale decomposition, the drift terms
in \eqref{h-dynamics} must vanish. Canceling the exponential factor,
we arrive at \eqref{F-Kac}.
\end{proof}
\end{proposition}

\subsection*{Declaration of Generative AI and AI-assisted technologies in the writing process}

During the preparation of this work the authors used Claude (Anthropic) in order to assist with structural review and editing of the manuscript, to explore different approaches for the construction of the counterexample in Section~\ref{sec:counterexample}, and to draft and revise selected passages of expository text. After using this tool, the authors reviewed and edited all content as needed and take full responsibility for the content of the publication.

%%%%%%%%%%%%%%%%%%%%%%%%%%%%%%%%%%%%%%%%%%%%%%%%%%%%%%%%%%%%%%%%%%%
%%                                                               %%
%% Supplementary Material, if any, should be provided in         %%
%% {supplement} environment  with title and short description.   %%
%%                                                               %%
%%%%%%%%%%%%%%%%%%%%%%%%%%%%%%%%%%%%%%%%%%%%%%%%%%%%%%%%%%%%%%%%%%%

%%%%%%%%%%%%%%%%%%%%%%%%%%%%%%%%%%%%%%%%%%%%%%%%%%%%%%%%%%%%%%%%%%%
%%                                                               %%
%% Use the two commands below for producing your bibliography    %%
%% with bibtex, then comment again the commands and include the  %%
%% content of the .bbl file in this file below the commands.     %%
%%                                                               %%
%%%%%%%%%%%%%%%%%%%%%%%%%%%%%%%%%%%%%%%%%%%%%%%%%%%%%%%%%%%%%%%%%%%

%\bibliographystyle{amsplain}
%\bibliography{Bibliography}

% add below the content of your .bbl file produced by bibtex.
\providecommand{\bysame}{\leavevmode\hbox to3em{\hrulefill}\thinspace}
\providecommand{\MR}{\relax\ifhmode\unskip\space\fi MR }
% \MRhref is called by the amsart/book/proc definition of \MR.
\providecommand{\MRhref}[2]{%
  \href{http://www.ams.org/mathscinet-getitem?mr=#1}{#2}
}
\providecommand{\href}[2]{#2}

%%%%%%%%%%%%%%%%%%%%%%%%%%%%%%%%%%%%%%%%%%%%%%%%%%%%%%%%%%%%%%%%%%%
%%                                                               %%
%% You may add acknowledgments (optional).                       %%
%%                                                               %%
%%%%%%%%%%%%%%%%%%%%%%%%%%%%%%%%%%%%%%%%%%%%%%%%%%%%%%%%%%%%%%%%%%%

%%%%%%%%%%%%%%%%%%%%%%%%%%%%%%%%%%%%%%%%%%%%%%%%%%%%%%%%%%%%%%%%%%%
%%                                                               %%
%% You have reached the end of your document.                    %%
%%                                                               %%
%%%%%%%%%%%%%%%%%%%%%%%%%%%%%%%%%%%%%%%%%%%%%%%%%%%%%%%%%%%%%%%%%%%

\end{document}